\renewcommand{\uppercasenonmath}[1]{}
\numberwithin{equation}{section} \theoremstyle{plain}
\newtheorem*{thm*}{Main Theorem}
\newtheorem{thm}{Theorem}[section]
\newtheorem{cor}[thm]{Corollary}
\newtheorem*{cor*}{Corollary}
\newtheorem{lem}[thm]{Lemma}
\newtheorem*{lem*}{Lemma}
\newtheorem*{fact*}{Fact}
\newtheorem*{nota*}{Notation}
\newtheorem*{prop*}{Proposition}
\newtheorem{rem}[thm]{Remark}
\newtheorem*{rem*}{Remark}
\newtheorem*{observation*}{Observation}
\newtheorem*{exa*}{Example}
\newtheorem*{df*}{Definition}
\newtheorem*{con*}{Construction}
\begin{document}
\begin{center}
{\large  \bf  Cosupport of artinian modules}

\vspace{0.5cm} Xiaoyan Yang\\
Department of Mathematics, Northwest Normal University, Lanzhou 730070,
China
E-mail: yangxy@nwnu.edu.cn
\end{center}

\bigskip
\centerline { \bf  Abstract}
\leftskip10truemm \rightskip10truemm \noindent Let $(R,\mathfrak{m})$ be a commutative local noetherian ring. For an artinian $R$-module $M$, we show the equality \begin{center}$\mathrm{cosupp}_RM=\mathrm{Cosupp}_RM$,\end{center} using the semi-discrete linearly compactness of $R$-module $\mathrm{Hom}_R(R_\mathfrak{p},M)$ where $\mathfrak{p}$ is a prime ideal of $R$.\\
\vbox to 0.3cm{}\\
{\it Key Words:} artinian module; cosupport\\
{\it 2020 Mathematics Subject Classification:} 13C05; 13B30.

\leftskip0truemm \rightskip0truemm
\bigskip
\section* { \bf Introduction}
Given a commutative
ring $R$ and a multiplicatively closed subset $S\subseteq R$, Melkersson and Schenzel \cite{MS} called the $S^{-1}R$-module $\mathrm{Hom}_R(S^{-1}R,M)$ the co-localization of an $R$-module $M$ which is dual to the ordinary localization
$S^{-1}M=S^{-1}R\otimes_RM$. They proved in this paper that their construction
works well when $M$ is artinian. For example, the
co-localization of an artinian $R$-module has a secondary representation and the functor $\mathrm{Hom}_R(S^{-1}R,-)$ is an exact functor on the category of artinian $R$-modules.

Related to the support of a module defined in terms of the localization, Melkersson and Schenzel \cite{MS}
also defined the cosupport of a module in terms of the co-localization.
The cosupport of an $R$-module $M$ as  \begin{center}$\mathrm{Cos}_RM=\{\mathfrak{p}\in\mathrm{Spec}R\hspace{0.03cm}|\hspace{0.03cm}\mathrm{Hom}_R(R_\mathfrak{p},M)\neq0\}$.\end{center} For an artinian $R$-module $M$ we have that $M\neq0$ if and only if $\mathrm{Cos}_RM\neq\emptyset$. But this does not
hold in general. Therefore, Yassemi \cite{Y} introduced the cocyclic modules and defined a sensible cosupport for non-artinian modules, which coincided with $\mathrm{Cos}_RM$
when $M$ is artinian. An $R$-module $L$ is cocyclic if $L$ is a submodule of $E(R/\mathfrak{m})$ for some $\mathfrak{m}\in\mathrm{Max}R$, where $E(R/\mathfrak{m})$ is the injective envelope
 of $R/\mathfrak{m}$. The cosupport of $M$ is defined as the set
of prime ideals $\mathfrak{p}$ such that there is a cocyclic homomorphic image $L$
of $M$ such that $\mathfrak{p}\subseteq\mathrm{Ann}_RL$, the annihilator of $L$, and denoted this set by $\mathrm{Cosupp}_RM$.

It is well-known that the localization $S^{-1}N$ of a finitely generated $R$-module $N$ is a finitely generated $S^{-1}R$-module. Therefore, one can obtain an equality
\begin{center}$\mathrm{supp}_RN=\mathrm{Supp}_RN=\mathrm{V}(\mathrm{Ann}_RN)$\end{center} by Nakaymam's lemma. However, the co-localization $\mathrm{Hom}_R(S^{-1}R,M)$ of an artinian $R$-module $M$ is almost never an artinian $S^{-1}R$-module, and it may even have infinite Goldie dimension (see \cite[Example 3.8]{CNh}). The
 purpose of this paper is to prove an analogue of the above equality for cosupport. That is,

\vspace{2mm} \noindent{\bf Theorem.}\label{Th1.4} {\it{Let $(R,\mathfrak{m})$ be a commutative local noetherian ring. For an artinian $R$-module $M$, there exists an equality \begin{center}$\mathrm{cosupp}_RM=\mathrm{Cosupp}_RM=\mathrm{V}(\mathrm{Ann}_RM)$.\end{center}}}
\vspace{2mm}

\bigskip
\section{\bf Preliminaries}
Unless stated to the contrary we assume throughout this paper that $R$ is a commutative noetherian ring with non-zero identity.

This section is devoted to recalling some notions and basic facts
for use throughout this paper. For terminology we shall follow \cite{BS} and \cite{Y}.

We write $\mathrm{Spec}R$ for the set of
prime ideals of $R$ and $\mathrm{Max}R$ for the set of
maximal ideals of $R$, For an ideal $\mathfrak{a}$ in $R$, we set
\begin{center}$\mathrm{V}(\mathfrak{a})=\{\mathfrak{p}\in\textrm{Spec}R\hspace{0.03cm}|\hspace{0.03cm}\mathfrak{a}\subseteq\mathfrak{p}\}$.
\end{center}

Let $M$ be an $R$-module. The set $\mathrm{Ass}_RM$ of associated prime of $M$ is
the set of prime ideals $\mathfrak{p}$ of $R$
such that there exists a cyclic submodule $N$ of $M$ such that $\mathfrak{p}=\mathrm{Ann}_RN$. The set of prime ideals $\mathfrak{p}$ such that there exists a cyclic submodule $N$
of $M$ with $\mathfrak{p}\supseteq\mathrm{Ann}_RN$ is well-known to be the support of $M$, denoted by $\mathrm{Supp}_RM$, that is the set
 \begin{center}$\{\mathfrak{p}\in\mathrm{Spec}R\hspace{0.03cm}|\hspace{0.03cm}
M_\mathfrak{p}\neq0\}$.\end{center}
A non-zero $R$-module $M$ is said to be secondary if for any
$x\in R$, the multiplication by $x$ on $M$ is either surjective or nilpotent. The radical
of the annihilator of $M$ is then a prime ideal $\mathfrak{p}$ and we say that $M$ is $\mathfrak{p}$-secondary.
A secondary representation of an $R$-module $M$ is an expression of
$M$ as a sum $M=M_1+\cdots+M_n$ of $\mathfrak{p}_i$-secondary submodules. This
representation is said to be minimal if the prime ideals $\mathfrak{p}_i$ are all distinct and none of the summands $M_i$ are redundant. Note that any secondary representation of $M$
can be refined to a minimal one. The set $\{\mathfrak{p}_1,\cdots,\mathfrak{p}_n\}$ is then independent of
the choice of minimal representation of $M$. This set is denoted by $\mathrm{Att}_RM$ and
called the set of attached prime ideals of $M$.

The concept of
linear compactness was first introduced by Lefschetz \cite{SL} for vector spaces of
arbitrary dimension and extended for modules by Zelinsky \cite{DZ} and Leptin \cite{HL}, which plays an important role for duality in algebra.

Let $M$ be a topological $R$-module. $M$ is said to be linearly topologized if $M$ has a
base of neighborhoods of the zero element $\mathcal{M}$ consisting of submodules. $M$ is called
Hausdorff if the intersection of all the neighborhoods of the zero element is 0. A
Hausdorff linearly topologized $R$-module $M$ is said to be linearly compact if $\mathcal{F}$ is a
family of closed cosets (i.e., cosets of closed submodules) in $M$ which has the finite
intersection property, then the cosets in $\mathcal{F}$ have a non-empty intersection (see \cite{Ma}).

\begin{lem}\label{lem:0.1}{\it{Let $M$ be a linearly compact $R$-module. One has
\begin{center}$\mathrm{Cos}_RM=\mathrm{Cosupp}_RM$.\end{center}In particular, $N\neq0$ if and only if $\mathrm{Cos}_RN\neq\emptyset$ for any linearly compact $R$-module $N$.}}
\end{lem}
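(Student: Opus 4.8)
First, the final assertion of the lemma follows from the displayed equality, since $\mathrm{Cosupp}_RN=\emptyset$ if and only if $N=0$ for \emph{every} $R$-module $N$: if $0\ne x\in N$, choose a maximal ideal $\mathfrak m\supseteq\mathrm{Ann}_Rx$; the composite $Rx\cong R/\mathrm{Ann}_Rx\twoheadrightarrow R/\mathfrak m\hookrightarrow E(R/\mathfrak m)$ extends, by injectivity of $E(R/\mathfrak m)$, to a map $N\to E(R/\mathfrak m)$ whose nonzero image is a cocyclic homomorphic image of $N$ with annihilator inside $\mathfrak m$, so $\mathfrak m\in\mathrm{Cosupp}_RN$. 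Hence it suffices to prove $\mathrm{Cos}_RM=\mathrm{Cosupp}_RM$, and my plan is to identify both sides with $\mathrm{Supp}_RD(M)$, where $E=\bigoplus_{\mathfrak m\in\mathrm{Max}R}E(R/\mathfrak m)$ and $D(-)=\mathrm{Hom}_R(-,E)$.

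The functor $D$ is exact, $R$-linear and faithful, and $E$ is an injective cogenerator, so $D(X)=0$ exactly when $X=0$. Since $M$ is linearly compact it is $D$-reflexive, $M\cong D(D(M))$ (the Matlis/Macdonald duality for linearly compact modules; cf.\ \cite{Ma}). Then, by Hom--tensor adjunction, for every prime $\mathfrak p$ one has
\[
\mathrm{Hom}_R(R_{\mathfrak p},M)\cong\mathrm{Hom}_R\bigl(R_{\mathfrak p},\mathrm{Hom}_R(D(M),E)\bigr)\cong\mathrm{Hom}_R\bigl((D(M))_{\mathfrak p},E\bigr)=D\bigl((D(M))_{\mathfrak p}\bigr),
\]
which is nonzero if and only if $(D(M))_{\mathfrak p}\ne0$; thus $\mathrm{Cos}_RM=\mathrm{Supp}_RD(M)$.

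Next I would show $\mathrm{Supp}_RD(M)=\mathrm{Cosupp}_RM$. For $\supseteq$: if $\mathfrak p\in\mathrm{Supp}_RD(M)$, pick $0\ne x\in D(M)$ with $\mathrm{Ann}_Rx\subseteq\mathfrak p$; then $\sqrt{\mathrm{Ann}_Rx}\subseteq\mathfrak p$, so some minimal prime $\mathfrak q$ of $\mathrm{Ann}_Rx$ lies in $\mathfrak p$, and since $Rx$ is finitely generated $\mathfrak q\in\mathrm{Ass}_R(Rx)$, giving an embedding $R/\mathfrak q\hookrightarrow D(M)$. Dualizing yields a surjection $M\cong D(D(M))\twoheadrightarrow D(R/\mathfrak q)=\bigoplus_{\mathfrak n\in\mathrm{Max}R}(0:_{E(R/\mathfrak n)}\mathfrak q)$; composing with the projection onto a summand with $\mathfrak n\supseteq\mathfrak q$ exhibits the cocyclic module $L:=(0:_{E(R/\mathfrak n)}\mathfrak q)\cong E_{R/\mathfrak q}(R/\mathfrak n)$, a faithful $R/\mathfrak q$-module, as a homomorphic image of $M$, so $\mathrm{Ann}_RL=\mathfrak q\subseteq\mathfrak p$ and $\mathfrak p\in\mathrm{Cosupp}_RM$. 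For $\subseteq$: a cocyclic homomorphic image $L\subseteq E(R/\mathfrak m)$ of $M$ with $\mathrm{Ann}_RL\subseteq\mathfrak p$ dualizes to a submodule $D(L)\subseteq D(M)$ with $\mathrm{Ann}_RD(L)=\mathrm{Ann}_RL\subseteq\mathfrak p$; as $L$ is Artinian, $D(L)$ is finitely generated over the complete local ring $\widehat R_{\mathfrak m}$, and for such a module the $R$-support is $\mathrm{V}(\mathrm{Ann}_RD(L))$, whence $\mathfrak p\in\mathrm{Supp}_RD(L)\subseteq\mathrm{Supp}_RD(M)$. Together with the previous paragraph this gives $\mathrm{Cos}_RM=\mathrm{Cosupp}_RM$.

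The step I expect to be the main obstacle is the equality $\mathrm{Supp}_RC=\mathrm{V}(\mathrm{Ann}_RC)$ used above for $C=D(L)$ finitely generated over $\widehat R_{\mathfrak m}$: it is immediate over $\widehat R_{\mathfrak m}$ and hence, when $R$ is local, over $R$ (every prime then lies under $\mathfrak m$), but for general $R$ it needs a descent argument forcing $\mathfrak p\subseteq\mathfrak m$, and this is exactly where linear compactness of $M$ re-enters—through reflexivity together with the fact that a linearly compact module breaks up, like an Artinian one, along the maximal ideals of its support, reducing the matter to the local case. If that route is awkward, the alternative I would pursue is to invoke the structure theorem that every linearly compact $R$-module is the inverse limit of its finitely cogenerated (Artinian) Hausdorff quotients, apply the Artinian case of the statement from \cite{MS} termwise, and pass to the limit, using that $\mathrm{Hom}_R(R_{\mathfrak p},-)$ commutes with these inverse limits and is exact on the Artinian quotients.
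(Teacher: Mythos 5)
Your argument has a genuine gap at its foundation: the claim that a linearly compact module is reflexive for the algebraic dual $D(-)=\mathrm{Hom}_R(-,E)$, $E=\bigoplus_{\mathfrak m}E(R/\mathfrak m)$. This is false. For a local ring $(R,\mathfrak m)$ with residue field $k$, the product $M=k^{\mathbb N}$ with the product of discrete topologies is linearly compact (a product of discrete artinian modules), yet $D(M)=\mathrm{Hom}_k(k^{\mathbb N},k)$ and $DD(M)=(k^{\mathbb N})^{**}$, and the biduality map $M\to DD(M)$ is injective but far from surjective. Macdonald's duality in \cite{Ma} is a \emph{topological} duality (continuous homomorphisms into a discrete module), not plain Matlis duality; algebraically Matlis-reflexive modules over a complete local ring are exactly the semi-discrete linearly compact ones (extensions of a finitely generated module by an artinian one), a strictly smaller class, and over a non-complete ring the situation is worse still. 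Consequently the two places where you use reflexivity both collapse: the isomorphism $\mathrm{Hom}_R(R_{\mathfrak p},M)\cong D\bigl((D(M))_{\mathfrak p}\bigr)$ (false for $M=k^{\mathbb N}$, where the left side at $\mathfrak m$ is $k^{\mathbb N}$ and the right side is $(k^{\mathbb N})^{**}$), and the surjection $M\twoheadrightarrow D(R/\mathfrak q)$, which you obtain by dualizing an inclusion into $D(M)$ and then identifying $DD(M)$ with $M$. What survives of your proposal are the easy ingredients: $\mathrm{Cosupp}_RN=\emptyset$ iff $N=0$, and $\mathrm{Cosupp}_RM=\mathrm{Supp}_RD(M)$, which is precisely the remark after \cite[Theorem 3.8]{Y}; the whole content of the lemma is the comparison of $\mathrm{Cos}_RM$ with these, and that is exactly the step your reflexivity shortcut cannot deliver.

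The paper's proof avoids duality altogether: the inclusion $\mathrm{Cos}_RM\subseteq\mathrm{Cosupp}_RM$ is quoted from \cite[Corollary 2.16]{Y}, and for the converse a prime $\mathfrak p\in\mathrm{Cosupp}_RM$ is witnessed by an artinian (cocyclic) quotient $A=M/N$ with $\mathrm{Ann}_RA\subseteq\mathfrak p$; then $\mathrm{Hom}_R(R_{\mathfrak p},A)\neq0$ by \cite[7.3]{MS}, and the exactness of the co-localization functor on linearly compact modules \cite[Corollary 2.5]{CNh} (with $N$ closed in $M$) pushes this nonvanishing up to $\mathrm{Hom}_R(R_{\mathfrak p},M)$. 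Your fallback sketch via $M\cong\varprojlim M/U$ over open submodules $U$ is much closer in spirit to this, since the discrete quotients $M/U$ are artinian and the Melkersson--Schenzel result applies to them; but as written it is only a sketch: you would still have to show that a prime of $\mathrm{Cosupp}_RM$ lies in $\mathrm{Cosupp}_R$ of a single artinian quotient (this is where the cocyclic-quotient definition enters), and that nonvanishing survives the inverse limit, i.e. that the induced system of co-localizations is surjective — which is again the Cuong--Nhan exactness the paper invokes. So the fallback could be completed, but the main route you propose cannot be repaired without restricting to a class of modules much smaller than all linearly compact ones.
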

\begin{proof}  Let
 By \cite[Corollary 2.16]{Y}, $\mathrm{Cos}_RM\subseteq\mathrm{Cosupp}_RM$. On the other hand, let $\mathfrak{p}\in\mathrm{Cosupp}_RM$. There exists an exact sequence
\begin{center}$0\rightarrow N\rightarrow M\rightarrow A\rightarrow0$,\end{center}
where $A$ is artinian with $\mathrm{Ann}_RA\subseteq\mathfrak{p}$. Thus, $\mathfrak{p}\in\mathrm{Cosupp}_RA$. Since $A$ is a linearly compact $R$-module with the discrete topology, $N$
is an open submodule of $M$, and hence $N$ is closed in $M$. It means that
$N$ is a linearly compact submodule of $M$. Hence the sequence $0\rightarrow \mathrm{Hom}_R(R_\mathfrak{p},N)\rightarrow \mathrm{Hom}_R(R_\mathfrak{p},M)\rightarrow \mathrm{Hom}_R(R_\mathfrak{p},A)\rightarrow0$ is exact by \cite[Corollary 2.5]{CNh}.
Since $\mathrm{Hom}_R(R_\mathfrak{p},A)\neq0$ by \cite[7.3]{MS}, we have $\mathrm{Hom}_R(R_\mathfrak{p},M)\neq0$, as desired.
\end{proof}

Let $M$ be an $R$-module. The `small' support of $M$ is the set \begin{center}$\mathrm{supp}_RM=\{\mathfrak{p}\in\mathrm{Spec}R\hspace{0.03cm}|\hspace{0.03cm}
\mathrm{Tor}^R_i(R/\mathfrak{p},M)_\mathfrak{p}\neq0\ \textrm{for\ some}\ i\}$.\end{center} By the remark after \cite[Theorem 3.8]{Y}, one has \begin{center}$\mathfrak{p}\in\mathrm{Cosupp}_RM \Longleftrightarrow \mathrm{Hom}_{R}(\bigoplus_\mathfrak{m}\mathrm{Hom}_R(M,E(R/\mathfrak{m})),
E(R/\mathfrak{p}))=:{^\mathfrak{p}}M\neq0$.\end{center}
For an $R$-module $M$, we define the `small' cosupport of $M$ is the set
\begin{center}$\mathrm{cosupp}_RM=\{\mathfrak{p}\in\mathrm{Spec}R\hspace{0.03cm}|\hspace{0.03cm}
{^\mathfrak{p}}\mathrm{Ext}^i_R(R/\mathfrak{p},M)\neq0\ \textrm{for\ some}\ i\}$,\end{center}which is the set of prime ideals $\mathfrak{p}$ such that $\mathrm{Ext}^i_R(R/\mathfrak{p},{^\mathfrak{p}}M)\neq0$ for some $i$.

A Hausdorff linearly topologized $R$-module $M$ is called semi-discrete if every
submodule of $M$ is closed. The class of
semi-discrete linearly compact $R$-modules contains all artinian $R$-modules and all finitely generated modules over a complete ring.

\begin{lem}\label{lem:0.2}{\it{Let $M$ be a semi-discrete linearly compact $R$-module. One has
\begin{center}$\mathrm{cosupp}_RM=\{\mathfrak{p}\in\mathrm{Spec}R\hspace{0.03cm}|\hspace{0.03cm}
\mathrm{Ext}^i_R(\kappa(\mathfrak{p}),M)\neq0\ \textrm{for\ some}\ i\}$,\end{center}where $\kappa(\mathfrak{p})=R_\mathfrak{p}/\mathfrak{p}R_\mathfrak{p}$.}}
\end{lem}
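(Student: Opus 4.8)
The plan is to express both sides of the desired equality through the Matlis dual of $M$. Set $E:=\bigoplus_{\mathfrak{m}\in\mathrm{Max}R}E(R/\mathfrak{m})$, an injective cogenerator of the category of $R$-modules, and $M^{\vee}:=\mathrm{Hom}_R(M,E)$. For $M$ semi-discrete linearly compact one has $M^{\vee}=\bigoplus_{\mathfrak{m}}\mathrm{Hom}_R(M,E(R/\mathfrak{m}))$ (each homomorphism $M\to E$ has image in a finite sub-sum), so that ${}^{\mathfrak{p}}M=\mathrm{Hom}_R(M^{\vee},E(R/\mathfrak{p}))$ in the notation of the excerpt. The only non-formal input will be Matlis (Macdonald) duality for semi-discrete linearly compact modules, in the shape of the biduality isomorphism $M\cong\mathrm{Hom}_R(M^{\vee},E)$. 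Everything else rests on the elementary identity
\[
\mathrm{Ext}^i_R(A,\mathrm{Hom}_R(X,J))\cong\mathrm{Hom}_R(\mathrm{Tor}^R_i(A,X),J)\qquad(J\ \text{injective}),
\]
which follows from the tensor--hom adjunction $\mathrm{Hom}_R(P_{\bullet},\mathrm{Hom}_R(X,J))\cong\mathrm{Hom}_R(P_{\bullet}\otimes_RX,J)$ for a projective resolution $P_{\bullet}\to A$ together with the exactness of $\mathrm{Hom}_R(-,J)$; and on the observations that $E(R/\mathfrak{p})$ is an injective cogenerator for $R_{\mathfrak{p}}$-modules, that $E$ is one for $R$-modules, and that $\mathrm{Tor}$ commutes with the flat map $R\to R_{\mathfrak{p}}$.

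First I would treat the left-hand side, where $M$ may be arbitrary. Putting $A=R/\mathfrak{p}$, $X=M^{\vee}$, $J=E(R/\mathfrak{p})$ in the displayed identity, and using ${}^{\mathfrak{p}}\mathrm{Ext}^i_R(R/\mathfrak{p},M)=\mathrm{Ext}^i_R(R/\mathfrak{p},{}^{\mathfrak{p}}M)$ as recorded in the excerpt,
\[
{}^{\mathfrak{p}}\mathrm{Ext}^i_R(R/\mathfrak{p},M)\cong\mathrm{Hom}_R\big(\mathrm{Tor}^R_i(R/\mathfrak{p},M^{\vee}),E(R/\mathfrak{p})\big).
\]
Since $\mathrm{Hom}_R(Y,E(R/\mathfrak{p}))\cong\mathrm{Hom}_{R_{\mathfrak{p}}}(Y_{\mathfrak{p}},E(R/\mathfrak{p}))$ and $E(R/\mathfrak{p})$ cogenerates $R_{\mathfrak{p}}$-modules, this vanishes exactly when $\mathrm{Tor}^R_i(R/\mathfrak{p},M^{\vee})_{\mathfrak{p}}=0$. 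Hence
\[
\mathrm{cosupp}_RM=\{\mathfrak{p}\in\mathrm{Spec}R\mid\mathrm{Tor}^R_i(R/\mathfrak{p},M^{\vee})_{\mathfrak{p}}\neq0\ \text{for some}\ i\}=\mathrm{supp}_R(M^{\vee}).
\]

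Next I would treat the right-hand side, and here the hypothesis on $M$ is used, through the biduality $M\cong\mathrm{Hom}_R(M^{\vee},E)$. Putting $A=\kappa(\mathfrak{p})$, $X=M^{\vee}$, $J=E$ in the displayed identity,
\[
\mathrm{Ext}^i_R(\kappa(\mathfrak{p}),M)\cong\mathrm{Ext}^i_R\big(\kappa(\mathfrak{p}),\mathrm{Hom}_R(M^{\vee},E)\big)\cong\mathrm{Hom}_R\big(\mathrm{Tor}^R_i(\kappa(\mathfrak{p}),M^{\vee}),E\big).
\]
As $E$ is a cogenerator, this is nonzero iff $\mathrm{Tor}^R_i(\kappa(\mathfrak{p}),M^{\vee})\neq0$; and since $\kappa(\mathfrak{p})=(R/\mathfrak{p})_{\mathfrak{p}}$ with $R\to R_{\mathfrak{p}}$ flat, $\mathrm{Tor}^R_i(\kappa(\mathfrak{p}),M^{\vee})\cong\mathrm{Tor}^R_i(R/\mathfrak{p},M^{\vee})_{\mathfrak{p}}$. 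So $\mathrm{Ext}^i_R(\kappa(\mathfrak{p}),M)\neq0$ for some $i$ precisely when $\mathfrak{p}\in\mathrm{supp}_R(M^{\vee})$, and comparison with the previous step gives $\mathrm{cosupp}_RM=\{\mathfrak{p}\mid\mathrm{Ext}^i_R(\kappa(\mathfrak{p}),M)\neq0\ \text{for some}\ i\}$. (The argument in fact yields the termwise equivalence ${}^{\mathfrak{p}}\mathrm{Ext}^i_R(R/\mathfrak{p},M)\neq0\Leftrightarrow\mathrm{Ext}^i_R(\kappa(\mathfrak{p}),M)\neq0$.)

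The step I expect to be the crux is the biduality: one must know a semi-discrete linearly compact $M$ is reflexive for $\mathrm{Hom}_R(-,E)$ and that the dual appearing there really is $\bigoplus_{\mathfrak{m}}\mathrm{Hom}_R(M,E(R/\mathfrak{m}))$, so that it agrees with the module ${}^{\mathfrak{p}}M$ defining the cosupport; this is precisely Matlis/Macdonald duality for this class of modules, and it cannot be dropped — for a non-reflexive module, e.g. a noncomplete local ring regarded over itself, $M\not\cong M^{\vee\vee}$ and the asserted equality fails. As an alternative avoiding explicit biduality, one may route the right-hand side through the co-localization: using the exactness of $\mathrm{Hom}_R(R_{\mathfrak{p}},-)$ on semi-discrete linearly compact modules (already invoked in Lemma~\ref{lem:0.1}) together with the vanishing of $\mathrm{Ext}^{>0}_R(R_{\mathfrak{p}},M)$ for linearly compact $M$, one obtains $\mathrm{Ext}^i_R(\kappa(\mathfrak{p}),M)\cong\mathrm{Hom}_R\big(R_{\mathfrak{p}},\mathrm{Ext}^i_R(R/\mathfrak{p},M)\big)$, whose nonvanishing is then identified with that of ${}^{\mathfrak{p}}\mathrm{Ext}^i_R(R/\mathfrak{p},M)$ by applying Lemma~\ref{lem:0.1} to the linearly compact module $\mathrm{Ext}^i_R(R/\mathfrak{p},M)$.
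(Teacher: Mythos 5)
Your reduction of the left-hand side is fine: the Ext--Tor adjunction argument showing $\mathrm{cosupp}_RM=\mathrm{supp}_R(M^\vee)$ needs no hypothesis on $M$ (modulo the small point that $M^\vee=\bigoplus_{\mathfrak m}\mathrm{Hom}_R(M,E(R/\mathfrak m))$, which is true for this class of modules but deserves an argument rather than a parenthesis). The genuine gap is exactly at the step you flag as the crux: the biduality $M\cong\mathrm{Hom}_R(M^\vee,E)$ with $E=\bigoplus_{\mathfrak m}E(R/\mathfrak m)$ is false in the generality in which the lemma is stated. Take $R=\mathbb{Z}$ and $M=\mathbb{Z}(p^\infty)$, which is artinian and hence semi-discrete linearly compact. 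Then $M^\vee\cong\mathbb{Z}_p$ and $\mathrm{Hom}_{\mathbb{Z}}(\mathbb{Z}_p,E)=\mathrm{Hom}_{\mathbb{Z}}(\mathbb{Z}_p,\mathbb{Q}/\mathbb{Z})$ is strictly larger than $M$: the group $\mathbb{Z}_p/\mathbb{Z}$ is divisible and its $q$-primary part is $\mathbb{Z}[1/q]/\mathbb{Z}\cong\mathbb{Z}(q^\infty)$ for every prime $q\neq p$, so there are nonzero maps $\mathbb{Z}_p\twoheadrightarrow\mathbb{Z}(q^\infty)\subseteq E$, and the bidual acquires $q$-torsion that $M$ does not have. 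Thus reflexivity with respect to this global $E$ fails already for artinian modules as soon as $R$ has infinitely many maximal ideals; it is not ``precisely Matlis/Macdonald duality''. Even over a local ring the reflexivity you need is true for semi-discrete linearly compact modules but is itself a nontrivial theorem (via Zöschinger's structure theorem one gets a minimax module with $R/\mathrm{Ann}_RM$ complete, and then one invokes Enochs' characterisation of Matlis reflexive modules); Macdonald's duality in \cite{Ma} concerns complete rings and topological duals and does not cover it as quoted. Since your identification of $\{\mathfrak p\mid\mathrm{Ext}^i_R(\kappa(\mathfrak p),M)\neq0\}$ with $\mathrm{supp}_RM^\vee$ rests entirely on this isomorphism, the proof as written does not establish the lemma (note the example above contradicts only your intermediate isomorphism, not the lemma itself, which does hold for $\mathbb{Z}(p^\infty)$).

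Your closing alternative is essentially the route the paper actually takes: for artinian $M$ it uses an injective resolution whose terms are finite sums of $E(R/\mathfrak m)$ with $\mathfrak m$ maximal, together with the exactness of $\mathrm{Hom}_R(R_{\mathfrak p},-)$ on linearly compact modules, to obtain $\mathrm{Ext}^i_R(\kappa(\mathfrak p),M)\cong\mathrm{Hom}_R(R_{\mathfrak p},\mathrm{Ext}^i_R(R/\mathfrak p,M))$, then applies Lemma~\ref{lem:0.1} to the linearly compact module $\mathrm{Ext}^i_R(R/\mathfrak p,M)$, and finally reduces the general semi-discrete linearly compact case to the artinian one through Zöschinger's exact sequence $0\to N\to M\to A\to0$ with $N$ finitely generated and $A$ artinian. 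To repair your write-up you would either have to expand that sketch (your unproved vanishing $\mathrm{Ext}^{>0}_R(R_{\mathfrak p},M)=0$ is exactly what the resolution argument replaces, and the non-artinian case still needs the Zöschinger reduction), or restrict the duality argument to local pieces, e.g.\ by decomposing the artinian part into its $\mathfrak m$-primary components and dualising each against its own $E(R/\mathfrak m)$, where reflexivity is available.
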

\begin{proof} First assume that $M$ is artinian. There is an injective resolution $M\rightarrow E^\bullet$ with $E^\bullet=0\rightarrow E(R/\mathfrak{m}_{01})\oplus\cdots\oplus E(R/\mathfrak{m}_{0n_0})\rightarrow E(R/\mathfrak{m}_{11})\oplus\cdots\oplus E(R/\mathfrak{m}_{1n_1})\rightarrow \cdots$ where each $\mathfrak{m}_{ij}\in\mathrm{Max}R$. Hence $\mathrm{Ext}^i_R(\kappa(\mathfrak{p}),M)=\mathrm{H}^i(\mathrm{Hom}_R(\kappa(\mathfrak{p}),E^\bullet))\cong
\mathrm{H}^i(\mathrm{Hom}_R(R_\mathfrak{p},\mathrm{Hom}_R(R/\mathfrak{p},E^\bullet)))\cong
\mathrm{Hom}_R(R_\mathfrak{p},\mathrm{H}^i(\mathrm{Hom}_R(R/\mathfrak{p},E^\bullet)))\cong
\mathrm{Hom}_R(R_\mathfrak{p},\mathrm{Ext}^i_R(R/\mathfrak{p},M))$ by \cite[3.5]{Ma} and \cite[Corollary 2.5]{CNh}. Consequently, we have the following equivalences: \begin{center}$\begin{aligned}\mathfrak{p}\in\mathrm{cosupp}_RM
&\Longleftrightarrow\mathfrak{p}\in\mathrm{Cosupp}_R\mathrm{Ext}^i_R(R/\mathfrak{p},M)\ \textrm{for\ some}\ i\\
&\Longleftrightarrow\mathfrak{p}\in\mathrm{Cos}_R\mathrm{Ext}^i_R(R/\mathfrak{p},M)\ \textrm{for\ some}\ i\\
&\Longleftrightarrow\mathrm{Ext}^i_R(\kappa(\mathfrak{p}),M)\neq0\ \textrm{for\ some}\ i,\end{aligned}$\end{center}
where the first one is by the remark after \cite[Theorem 3.8]{Y}, the second one is Lemma \ref{lem:0.1} and the last one is by the preceding isomorphism. Now assume that $M$ is a semi-discrete linearly compact $R$-module. Then \cite[Theorem]{Z} yields a short exact sequence \begin{center}$0\rightarrow N\rightarrow M\rightarrow A\rightarrow0$,\end{center}
where $A$ is artinian and $N$ is finitely generated. If $\mathfrak{p}\in\mathrm{cosupp}_RN$, then $\mathfrak{p}$ is a maximal ideal by \cite[Theorem 2.10]{Y}, and so $\mathrm{Ext}^i_R(\kappa(\mathfrak{p}),N)\neq0$ for some $i$. Therefore, \begin{center}$\begin{aligned}\mathfrak{p}\in\mathrm{cosupp}_RM
&\Longleftrightarrow\mathrm{Ext}^i_R(R/\mathfrak{p},{^\mathfrak{p}}N)\neq0\ \textrm{or}\ \mathrm{Ext}^j_R(R/\mathfrak{p},{^\mathfrak{p}}A)\neq0\ \textrm{for\ some}\ i,j\\
&\Longleftrightarrow\mathrm{Ext}^i_R(\kappa(\mathfrak{p}),N)\neq0\ \textrm{or}\ \mathrm{Ext}^j_R(\kappa(\mathfrak{p}),A)\neq0\ \textrm{for\ some}\ i,j\\
&\Longleftrightarrow\mathrm{Ext}^i_R(\kappa(\mathfrak{p}),M)\neq0\ \textrm{for\ some}\ i,\end{aligned}$\end{center}
We have show the desired equivalence.
\end{proof}

\begin{rem}\label{lem:0.3}{\rm The notion of `small' cosupport for a semi-discrete linearly compact $R$-module $M$ is the same as the one introduced by Benson, Iyengar and Krause \cite{BIK2}.}
\end{rem}

\bigskip
\section{\bf Cosupport of artinian modules}
 In this section we give the proof of the theorem in introduction. First we prove several lemmas
that are crucial for our argument.

\begin{lem}\label{lem:3.4'}{\it{Let $\mathfrak{p}$ be a non-maximal prime ideal of $R$ and $M$ an artinian $R$-module. One has an inequality
\begin{center}$\mathrm{Ass}_R \mathrm{Hom}_R(R_\mathfrak{p},M)\subseteq\{\mathfrak{q}\in\mathrm{Cosupp}_RM\hspace{0.03cm}|\hspace{0.03cm}\mathfrak{q}\subseteq\mathfrak{p}\}$.\end{center}}}
\end{lem}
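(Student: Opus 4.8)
The plan is to analyze an arbitrary associated prime $\mathfrak{q} \in \ass_R \Hom_R(R_\mathfrak{p}, M)$ by working with the duality between artinian modules and their Matlis duals. First I would pick $\mathfrak{q} \in \ass_R \Hom_R(R_\mathfrak{p}, M)$, so there is an element $\xi \in \Hom_R(R_\mathfrak{p}, M)$ with $\mathfrak{q} = \ann_R(R\xi)$. The key structural input is that $\Hom_R(R_\mathfrak{p}, M)$ is a module over $R_\mathfrak{p}$: since $R_\mathfrak{p} \otimes_R R_\mathfrak{p} \cong R_\mathfrak{p}$, the natural $R$-action factors through $R_\mathfrak{p}$. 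Consequently $\ann_R(R\xi) = \mathfrak{q}$ is the contraction of an ideal of $R_\mathfrak{p}$, which forces $\mathfrak{q} \subseteq \mathfrak{p}$. This handles the containment $\mathfrak{q} \subseteq \mathfrak{p}$ in the displayed set; I should double-check that $\xi$ is annihilated by some element of $R \setminus \mathfrak{p}$ exactly when... actually more carefully: every element of $R \setminus \mathfrak{p}$ acts invertibly on $\Hom_R(R_\mathfrak{p}, M)$, hence cannot lie in $\ann_R(R\xi)$ unless $\xi = 0$, so $\mathfrak{q} \cap (R \setminus \mathfrak{p}) = \emptyset$, i.e. $\mathfrak{q} \subseteq \mathfrak{p}$.

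The remaining, and I expect more delicate, point is that $\mathfrak{q} \in \Cosupp_R M$. By the characterization recalled in the excerpt (the remark after \cite[Theorem 3.8]{Y}), it suffices to produce a cocyclic homomorphic image $L$ of $M$ with $\mathfrak{q} \subseteq \ann_R L$; equivalently, one may try to show ${}^{\mathfrak{q}} M \neq 0$. Here I would exploit that $\mathfrak{q} \in \ass_R \Hom_R(R_\mathfrak{p}, M)$ gives an injection $R/\mathfrak{q} \hookrightarrow \Hom_R(R_\mathfrak{p}, M) = \Hom_R(R_\mathfrak{p}, M)$. Since $M$ is artinian, $\Hom_R(R_\mathfrak{p}, M)$ is semi-discrete linearly compact (indeed this is the input flagged in the abstract and follows from \cite{Ma}/\cite{CNh}), and linearly compact modules behave well under Matlis-type duality. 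Applying $\Hom_R(-, E)$ for a suitable injective cogenerator $E$, the injection $R/\mathfrak{q} \hookrightarrow \Hom_R(R_\mathfrak{p}, M)$ dualizes to a surjection whose source relates to $M \otimes_R R_\mathfrak{p}$ or to the relevant completion, and one extracts from this that some localization/co-localization of $M$ at $\mathfrak{q}$ is nonzero.

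Concretely, the cleanest route is probably: from $0 \neq \Hom_R(R/\mathfrak{q}, \Hom_R(R_\mathfrak{p}, M)) \cong \Hom_R(R/\mathfrak{q} \otimes_R R_\mathfrak{p}, M) = \Hom_R(\kappa(\mathfrak{q})', M)$ where $R/\mathfrak{q} \otimes_R R_\mathfrak{p} = (R/\mathfrak{q})_\mathfrak{p}$ (a nonzero module supported at primes contained in $\mathfrak{p}$ and containing $\mathfrak{q}$), deduce that $\Hom_R((R/\mathfrak{q})_\mathfrak{p}, M) \neq 0$, hence $\Hom_R(R/\mathfrak{q}, M) \neq 0$ after localizing, and then invoke the artinian hypothesis on $M$ together with \cite[7.3]{MS} (as used in the proof of Lemma \ref{lem:0.1}) and Lemma \ref{lem:0.1} itself to translate nonvanishing of a $\Hom$ into membership of $\mathfrak{q}$ in $\Cosupp_R M$. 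The main obstacle I anticipate is bookkeeping the duality correctly — specifically making sure the module $\Hom_R(R_\mathfrak{p}, M)$ is handled as a linearly compact module so that $\Hom_R(-, E)$ is exact and reflects nonvanishing, and matching $\ass$ of the co-localization to the prime $\mathfrak{q}$ one wants to land in $\Cosupp_R M$; the inclusion $\mathfrak{q} \subseteq \mathfrak{p}$ part, by contrast, should be essentially formal from the $R_\mathfrak{p}$-module structure.
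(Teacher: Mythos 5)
Your handling of the containment $\mathfrak{q}\subseteq\mathfrak{p}$ is fine: $\Hom_R(R_\mathfrak{p},M)$ is an $R_\mathfrak{p}$-module, so no element of $R\setminus\mathfrak{p}$ can annihilate a nonzero element, and hence any associated prime is contained in $\mathfrak{p}$. The gap is in the second half, where you must show $\mathfrak{q}\in\mathrm{Cosupp}_RM$. Your "concrete route" replaces the hypothesis $\mathfrak{q}\in\mathrm{Ass}_R\Hom_R(R_\mathfrak{p},M)$ by the strictly weaker statement $\Hom_R(R/\mathfrak{q},\Hom_R(R_\mathfrak{p},M))\neq0$, i.e.\ $\Hom_R((R/\mathfrak{q})_\mathfrak{p},M)\neq0$, and then weakens further to $\Hom_R(R/\mathfrak{q},M)\neq0$ before trying to "translate nonvanishing of a Hom into membership of $\mathfrak{q}$ in $\mathrm{Cosupp}_RM$." That last inference cannot work: over a local ring, a nonzero artinian $M$ has nonzero socle, so $\Hom_R(R/\mathfrak{q},M)\supseteq\Hom_R(R/\mathfrak{m},M)\neq0$ for \emph{every} prime $\mathfrak{q}$, whereas $\mathrm{Cosupp}_RM=\mathrm{V}(\mathrm{Ann}_RM)$ is in general a proper subset of $\mathrm{Spec}\,R$ (e.g.\ $M=R/\mathfrak{m}$). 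Even the intermediate statement $\Hom_R((R/\mathfrak{q})_\mathfrak{p},M)\neq0$ only forces $\mathrm{Ann}_RM$ to lie in \emph{some} prime between $\mathfrak{q}$ and $\mathfrak{p}$, not in $\mathfrak{q}$ itself, so it does not yield $\mathfrak{q}\in\mathrm{Cosupp}_RM$ either. In short, you discard exactly the extra strength of being an associated prime (an element whose annihilator is \emph{equal} to $\mathfrak{q}$, not merely containing it), and the remaining information is too weak to prove the lemma.

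What is actually needed is either that full strength or a structural induction. Note that from $\mathfrak{q}=\mathrm{Ann}_R(\xi)$ for some $\xi\in\Hom_R(R_\mathfrak{p},M)$ one gets immediately $\mathrm{Ann}_RM\subseteq\mathrm{Ann}_R\Hom_R(R_\mathfrak{p},M)\subseteq\mathfrak{q}$, and then \cite[7.3]{MS} gives $\Hom_R(R_\mathfrak{q},M)\neq0$, so $\mathfrak{q}\in\mathrm{Cos}_RM\subseteq\mathrm{Cosupp}_RM$ by \cite[Corollary 2.16]{Y}; this is the kind of step your chain never performs. The paper's own proof takes a different, more structural route: it uses the composition series of the artinian module $M$ with cocyclic factors \cite[Theorem 2.13]{Y} and inducts on its length, the base case being that the co-localization of a $\mathfrak{p}_1$-secondary module is either zero or again $\mathfrak{p}_1$-secondary \cite[Lemma 3.1]{CNh}, together with exactness of $\Hom_R(R_\mathfrak{p},-)$ on artinian modules for the inductive step. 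Your duality-based outline (dualizing the embedding $R/\mathfrak{q}\hookrightarrow\Hom_R(R_\mathfrak{p},M)$ via linear compactness) might be completable, but as written it is only a sketch and the concrete argument you give in its place does not establish $\mathfrak{q}\in\mathrm{Cosupp}_RM$.
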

\begin{proof} Since $M$ is an artinian $R$-module, it follows from \cite[Theorem 2.13]{Y} that $M$ has a composition series
\begin{center}$0=M_n\subset M_{n-1}\subset\cdots\subset M_1\subset M_0=M$,\end{center}where $M_{i-1}/M_i$ is cocyclic and $\mathfrak{p}_i=(0:_RM_{i-1}/M_i)\in\mathrm{Spec}R$ for $i=1,\cdots,n$. We use induction on $n$. If $n=1$ then $M=M_0/M_1$ is $\mathfrak{p}_1$-secondary and so $\mathrm{Hom}_R(R_\mathfrak{p},M)$ is either 0 or $\mathfrak{p}_1$-secondary by \cite[Lemma 3.1]{CNh}.
When $\mathrm{Hom}_R(R_\mathfrak{p},M)=0$ and the assertion holds. Assume that $\mathrm{Hom}_R(R_\mathfrak{p},M)$ is $\mathfrak{p}_1$-secondary and let $\mathfrak{q}\in\mathrm{Ass}_{R}\mathrm{Hom}_R(R_\mathfrak{p},M)$. Then
\begin{center}$\mathfrak{p}_1=\mathrm{Rad}(0:_{R}\mathrm{Hom}_R(R_\mathfrak{p},M))
\subseteq\mathfrak{q}$.\end{center}
Consequently, $\mathfrak{p}_1\subseteq\mathfrak{q}\subseteq\mathfrak{p}$ and hence $\mathfrak{q}\in\mathrm{Cosupp}_RM$. Now assume that $n>1$. One has a short exact sequence
\begin{center}$0\rightarrow M_1\rightarrow M \rightarrow M/M_1\rightarrow0$,\end{center}where $M/M_1$ is $\mathfrak{p}_1$-secondary and $M_1$ has a composition series
$0=M_n\subset M_{n-1}\subset\cdots\subset M_1$ such that $M_{i-1}/M_i$ is cocyclic and $\mathfrak{p}_i=(0:_RM_{i-1}/M_i)\in\mathrm{Spec}R$ for $i=2,\cdots,n$. Also we have the following exact sequence \begin{center}$0\rightarrow\mathrm{Hom}_R(R_\mathfrak{p},M_1)\rightarrow\mathrm{Hom}_R(R_\mathfrak{p},M)\rightarrow \mathrm{Hom}_R(R_\mathfrak{p},M/M_1)\rightarrow0$.\end{center} Let $\mathfrak{q}\in\mathrm{Ass}_{R}\mathrm{Hom}_R(R_\mathfrak{p},M)$. Then $\mathfrak{q}\in\mathrm{Ass}_{R}\mathrm{Hom}_R(R_\mathfrak{p},M_1)$ or $\mathfrak{q}\in\mathrm{Ass}_{R}\mathrm{Hom}_R(R_\mathfrak{p},M/M_1)$. If $\mathfrak{q}\in\mathrm{Ass}_{R}\mathrm{Hom}_R(R_\mathfrak{p},M/M_1)$ then $\mathfrak{q}\subseteq\mathfrak{p}$ and $\mathfrak{q}\in\mathrm{Cosupp}_RM/M_1\subseteq\mathrm{Cosupp}_RM$ by the preceding proof. If $\mathfrak{q}\in\mathrm{Ass}_{R}\mathrm{Hom}_R(R_\mathfrak{p},M_1)$ then $\mathfrak{q}\subseteq\mathfrak{p}$ and $\mathfrak{q}\in\mathrm{Cosupp}_RM_1\subseteq\mathrm{Cosupp}_RM$ by the induction, as required.
\end{proof}

\begin{lem}\label{lem:3.4}{\it{Let $\mathfrak{p}$ be a non-maximal prime ideal of a local ring $(R,\mathfrak{m})$ and $M$ an artinian $R$-module. There exists an equality
\begin{center}$\mathrm{Ass}_R\mathrm{Hom}_R(R_\mathfrak{p},M)=
\{\mathfrak{q}\in\mathrm{Cosupp}_RM\hspace{0.03cm}|\hspace{0.03cm}\mathfrak{q}\subseteq\mathfrak{p}\}$.\end{center}}}
\end{lem}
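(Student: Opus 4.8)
Throughout put $N=\operatorname{Hom}_R(R_\mathfrak p,M)$, regarded as an $R_\mathfrak p$-module. The inclusion $\operatorname{Ass}_RN\subseteq\{\mathfrak q\in\operatorname{Cosupp}_RM:\mathfrak q\subseteq\mathfrak p\}$ is exactly Lemma \ref{lem:3.4'}, so the whole content is the reverse inclusion: given $\mathfrak q\in\operatorname{Cosupp}_RM$ with $\mathfrak q\subseteq\mathfrak p$, I must produce an element of $N$ with annihilator precisely $\mathfrak q$. The plan is to induct on the length $n$ of a composition series $0=M_n\subset\dots\subset M_0=M$ with each $M_{i-1}/M_i$ cocyclic and $\mathfrak p_i=(0:_RM_{i-1}/M_i)$ prime, as furnished by \cite[Theorem 2.13]{Y}; there each $M_{i-1}/M_i$ is $\mathfrak p_i$-coprimary with annihilator $\mathfrak p_i$. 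Since Matlis duality followed by $\operatorname{Hom}_R(-,E(R/\mathfrak p))$ is exact, the functor $M\mapsto{}^{\mathfrak p}M$ is exact and hence $\operatorname{Cosupp}_R$ is additive on short exact sequences; applied to the series this shows $\mathfrak q\in\operatorname{Cosupp}_R(M_{j-1}/M_j)$ for some $j$, so the problem reduces (after bookkeeping that transports the conclusion from a subquotient back up to $M$) to the case in which $M$ itself is cocyclic and $\mathfrak p_1$-coprimary with $\operatorname{Ann}_RM=\mathfrak p_1$.

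The base case $n=1$ is the heart of the argument. Passing to the domain $R/\mathfrak p_1$ we may assume $\mathfrak p_1=0$, so $R$ is a domain and $M$ a faithful cocyclic coprimary artinian module (essentially $M=E_R(R/\mathfrak m)$); then $\operatorname{Cosupp}_RM\cap\{\mathfrak q\subseteq\mathfrak p\}=\{\mathfrak q\in\operatorname{Spec}R:\mathfrak q\subseteq\mathfrak p\}$ and I must show every such $\mathfrak q$ lies in $\operatorname{Ass}_RN$. By Melkersson–Schenzel the co-localization $N$ has a secondary representation and, crucially, is a semi-discrete linearly compact $R_\mathfrak p$-module; moreover $\operatorname{Att}_{R_\mathfrak p}N=\{0\}$ (it is cut out from $\operatorname{Att}_RM=\{0\}$), so $N$ is $0$-coprimary. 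I would then handle the base case by a secondary induction on $\dim R$: for $\mathfrak q\neq0$ one has $(0:_N\mathfrak q)\cong\operatorname{Hom}_{R/\mathfrak q}\big((R/\mathfrak q)_{\mathfrak p/\mathfrak q},(0:_M\mathfrak q)\big)$, which is the co-localization over the strictly lower-dimensional domain $R/\mathfrak q$ of a faithful cocyclic coprimary artinian module, so $\mathfrak q\in\operatorname{Ass}_{R/\mathfrak q}(0:_N\mathfrak q)\subseteq\operatorname{Ass}_RN$ by induction; this reduces everything to the single statement $0\in\operatorname{Ass}_RN$, i.e.\ that $\operatorname{Hom}_R(R_\mathfrak p,E_R(R/\mathfrak m))$ contains an element of zero annihilator — which is where the semi-discrete linear compactness, via a Zöschinger-type sequence $0\to N'\to N\to N''\to0$ with $N'$ finitely generated and $N''$ artinian over $R_\mathfrak p$ together with $\operatorname{Hom}_R(R/\mathfrak q,N)\cong(R_\mathfrak p/\mathfrak qR_\mathfrak p)^{\vee}$, is used to pin down $\operatorname{Ass}_RN$.

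For the inductive step on $n$, use a short exact sequence such as $0\to M_1\to M\to M_0/M_1\to0$ with $M_0/M_1$ cocyclic coprimary and $M_1$ of composition length $n-1$; applying $\operatorname{Hom}_R(R_\mathfrak p,-)$, which is exact on artinian modules by \cite[Corollary 2.5]{CNh}, gives $0\to\operatorname{Hom}_R(R_\mathfrak p,M_1)\to N\to\operatorname{Hom}_R(R_\mathfrak p,M_0/M_1)\to0$. If $\mathfrak q\in\operatorname{Cosupp}_RM_1$, then by induction and "associated primes of a submodule" $\mathfrak q\in\operatorname{Ass}_RN$ at once; if $\mathfrak q\in\operatorname{Cosupp}_R(M_0/M_1)$ only, the base case puts $\mathfrak q$ in $\operatorname{Ass}$ of the quotient, and one lifts it into $\operatorname{Ass}_RN$ using the vanishing of $\operatorname{Ext}^1_{R_\mathfrak q}\big(\kappa(\mathfrak q),\operatorname{Hom}_R(R_\mathfrak p,M_1)_\mathfrak q\big)$, or more safely by re-running the dimension argument on the relevant coprimary subquotient of $M$ directly. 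The main obstacle, by a wide margin, is the single assertion $0\in\operatorname{Ass}_R\operatorname{Hom}_R(R_\mathfrak p,E_R(R/\mathfrak m))$ in the base case: the co-localization is neither artinian nor finitely generated and may have infinite Goldie dimension, so there is no formal reason for it to contain a torsion-free cyclic submodule, and this is precisely the point at which the semi-discrete linear compactness of $\operatorname{Hom}_R(R_\mathfrak p,M)$ highlighted in the abstract is indispensable.
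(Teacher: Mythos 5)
Your proposal gets the easy inclusion right (it is Lemma \ref{lem:3.4'}), and your reduction of the hard inclusion modulo $\mathfrak{q}$ — using $(0:_N\mathfrak{q})\cong\mathrm{Hom}_{R/\mathfrak{q}}\bigl((R/\mathfrak{q})_{\mathfrak{p}},\mathrm{Hom}_R(R/\mathfrak{q},M)\bigr)$ so that everything funnels into showing $(0)\in\mathrm{Ass}_R\mathrm{Hom}_R(R_\mathfrak{p},M)$ for a local domain $R$ — is in fact the same reduction the paper makes. But at exactly that decisive point you stop: you assert that semi-discrete linear compactness ``is indispensable'' and gesture at a Z\"oschinger sequence and a duality formula, without any argument producing an element of zero annihilator. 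That is the whole content of the lemma, and it is not a consequence of linear compactness in any formal way. The paper proves it by a concrete construction: choose $x\in\mathfrak{m}\setminus\mathfrak{p}$ and a prime $\mathfrak{q}_0$ minimal over $xR$ (so of height one); prime avoidance shows that for $s\notin\mathfrak{p}$, $t\notin\mathfrak{q}_0$ one has $1/st\in R_\mathfrak{p}+R_{\mathfrak{q}_0}$, hence $R_\mathfrak{p}+R_{\mathfrak{q}_0}$ is a subring of the fraction field $\kappa$ which must equal $\kappa$ (a proper intermediate ring would force $\mathfrak{q}_0$ to lie under $\mathfrak{p}$, contradicting $x\in\mathfrak{q}_0\setminus\mathfrak{p}$); therefore $R_\mathfrak{p}/(R_\mathfrak{p}\cap R_{\mathfrak{q}_0})\cong\kappa/R_{\mathfrak{q}_0}\neq0$, and since $\kappa/R_{\mathfrak{q}_0}$ is divisible, any nonzero $f\in\mathrm{Hom}_R(\kappa/R_{\mathfrak{q}_0},M)$ satisfies $af\neq0$ for all $a\neq0$, giving $(0)\in\mathrm{Ass}_R\mathrm{Hom}_R(R_\mathfrak{p},M)$. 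In the paper the Z\"oschinger sequence $0\to N\to\mathrm{Hom}_R(R_\mathfrak{p},M)\to A\to0$ serves a different purpose than the one you assign it: via the case split ($N$ artinian or not) and an Ext-nonvanishing argument it shows $\mathrm{Hom}_R(R/\mathfrak{q},\mathrm{Hom}_R(R_\mathfrak{p},M))\neq0$, i.e.\ that the module to which the domain construction is applied is nonzero. Without some such explicit step your proof has no content where it matters most — and you say as much yourself.

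A secondary gap is the inductive step over the cocyclic composition series. If $\mathfrak{q}$ only lies in $\mathrm{Cosupp}_R(M_0/M_1)$, knowing $\mathfrak{q}\in\mathrm{Ass}_R\mathrm{Hom}_R(R_\mathfrak{p},M_0/M_1)$ does not put $\mathfrak{q}$ in $\mathrm{Ass}_R\mathrm{Hom}_R(R_\mathfrak{p},M)$: associated primes of a quotient do not lift along extensions, and the $\mathrm{Ext}^1$-vanishing you invoke to force the lift is unjustified (there is no reason for $\mathrm{Ext}^1_{R_\mathfrak{q}}(\kappa(\mathfrak{q}),\mathrm{Hom}_R(R_\mathfrak{p},M_1)_\mathfrak{q})$ to vanish). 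The paper sidesteps this entirely: the filtration argument is used only for the easy inclusion (Lemma \ref{lem:3.4'}), while for the hard inclusion it fixes $\mathfrak{q}\in\mathrm{Cosupp}_RM$ with $\mathfrak{q}\subseteq\mathfrak{p}$, replaces $M$ by the nonzero artinian $R/\mathfrak{q}$-module $\mathrm{Hom}_R(R/\mathfrak{q},M)$, and then runs the domain argument above. If you want to salvage your outline, drop the composition-series induction for this direction and argue directly with $\mathrm{Hom}_R(R/\mathfrak{q},M)$; but you still must supply the missing construction of an element with annihilator $(0)$.
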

\begin{proof} The left hand side is included in the right hand side by Lemma \ref{lem:3.4'}.
  On the other hand, let $\mathfrak{q}\in\mathrm{Cosupp}_RM$ and $\mathfrak{q}\subseteq\mathfrak{p}$.
It follows from \cite[7.5,6.2,5.5]{Ma} that $\mathrm{Hom}_R(R_\mathfrak{p},M)$ is a semi-discrete linearly compact $R$-module, so \cite[Theorem]{Z} yields a short exact sequence
\begin{center}$0\rightarrow N\rightarrow \mathrm{Hom}_R(R_\mathfrak{p},M)\rightarrow A\rightarrow0$,\end{center}where $N$ is finitely generated and $A$ is artinian. If $N$ is artinian, then $\mathrm{Hom}_R(R_\mathfrak{p},M)$ is artinian and so $\mathrm{Hom}_R(R/\mathfrak{q},\mathrm{Hom}_R(R_\mathfrak{p},M))\neq0$ by \cite[Theorem 4.3]{Y}. Assume that $N$ is not artinian. Then there is a non-maximal prime ideal $\mathfrak{u}$ in $\mathrm{Ass}_RN$, and so $\mathfrak{u}\in\mathrm{Ass}_{R}\mathrm{Hom}_R(R_\mathfrak{p},M)$ and $\mathfrak{u}\subseteq\mathfrak{p}$ by Lemma \ref{lem:3.4'}. Set
$\mathfrak{p}'\in\mathrm{V}(\mathfrak{u}+\mathfrak{q})$ and $\mathfrak{p}'\subseteq\mathfrak{p}$. Then $\mathfrak{m}\neq\mathfrak{p}'\in\mathrm{supp}_RN$ and $\mathrm{Ext}^\ast_{R_{\mathfrak{p}'}}(\kappa(\mathfrak{p}'),N_{\mathfrak{p}'})\neq0$, $\mathrm{Ext}^\ast_{R_{\mathfrak{p}'}}(\kappa(\mathfrak{p}'),A_{\mathfrak{p}'})=0$ by \cite[Remark 9.2]{BIK}.
Hence the exact sequence \begin{center}$\mathrm{Ext}^\ast_{R_{\mathfrak{p}'}}(\kappa(\mathfrak{p}'),N_{\mathfrak{p}'})\rightarrow \mathrm{Ext}^\ast_{R_{\mathfrak{p}'}}(\kappa(\mathfrak{p}'),\mathrm{Hom}_R(R_\mathfrak{p},M)_{\mathfrak{p}'})\rightarrow \mathrm{Ext}^\ast_{R_{\mathfrak{p}'}}(\kappa(\mathfrak{p}'),A_{\mathfrak{p}'})=0$\end{center}implies that $\mathrm{Ext}^\ast_{R}(R/\mathfrak{p}',\mathrm{Hom}_R(R_\mathfrak{p},M))\neq0$. Also $\mathfrak{q}\subseteq\mathfrak{p}'$, so $\mathrm{Ext}^\ast_{R}(R/\mathfrak{q},\mathrm{Hom}_R(R_\mathfrak{p},M))\neq0$. Since $M$ is artinian, there is an injective resolution $M\rightarrow E^\bullet$ with $E^\bullet=0\rightarrow E(R/\mathfrak{m})^{n_0}\rightarrow E(R/\mathfrak{m})^{n_1}\rightarrow\cdots$. Then $\mathrm{Hom}_R(R_\mathfrak{p},M)\rightarrow \mathrm{Hom}_R(R_\mathfrak{p},E^\bullet)$ is also an injective resolution.
Therefore, we have the following isomorphisms
\begin{center}$\begin{aligned}\mathrm{Ext}^\ast_{R/\mathfrak{q}}((R/\mathfrak{q})_\mathfrak{p},\mathrm{Hom}_R(R/\mathfrak{q},M))
&\cong\mathrm{Ext}^\ast_R(R_\mathfrak{p}/\mathfrak{q}R_\mathfrak{p},M)\\
&=\mathrm{H}^\ast(\mathrm{Hom}_R(R_\mathfrak{p}/\mathfrak{q}R_\mathfrak{p},E^\bullet))\\
&\cong\mathrm{H}^\ast(\mathrm{Hom}_R(R/\mathfrak{q},\mathrm{Hom}_R(R_\mathfrak{p},E^\bullet)))\\
&=\mathrm{Ext}^\ast_R(R/\mathfrak{q},\mathrm{Hom}_R(R_\mathfrak{p},M))\neq0.\end{aligned}$\end{center}
As
$\mathrm{Hom}_R(R/\mathfrak{q},M)$ is a nonzero artinian $R/\mathfrak{q}$-module by \cite[Theorem 4.3]{Y}, it follows from \cite[Theorem 2.4]{CNh} that $\mathrm{Hom}_{R/\mathfrak{q}}((R/\mathfrak{q})_\mathfrak{p},\mathrm{Hom}_R(R/\mathfrak{q},M))\neq0$. We may assume that $R$ is a domain,
i.e., we have to show that $(0)\in\mathrm{Ass}_R\mathrm{Hom}_R(R_\mathfrak{p},M)$. Let $x\in\mathfrak{m}\backslash\mathfrak{p}$ and let $\mathfrak{q}$ be a
prime ideal minimal over $xR$, so that $\mathrm{ht}\mathfrak{q}=1$ by Krull's principal ideal theorem. If $s\in R\backslash\mathfrak{p}$
and $t\in R\backslash\mathfrak{q}$, then $(s,t)R$ is neither contained in $\mathfrak{p}$ nor $\mathfrak{q}$, so $(s,t)R\nsubseteq\mathfrak{p}\cup\mathfrak{q}$, i.e. $u=as+bt\in(R\backslash\mathfrak{p})\cap(R\backslash\mathfrak{q})$ for some $a,b\in R$. Then $\frac{1}{st}=\frac{u}{stu}=\frac{a}{tu}+\frac{b}{su}\in R_\mathfrak{p}+R_\mathfrak{q}$.
This shows that $R_\mathfrak{p}+R_\mathfrak{q}$ is a subring of $R/\mathfrak{m}=\kappa$. If it is a proper
subring of $\kappa$, then it possesses a nonzero prime ideal $\mathfrak{u}$ such that $0\neq\mathfrak{u}\cap R\subseteq\mathfrak{q}$. But $\mathrm{ht}\mathfrak{q}=1$, so $\mathfrak{u}\cap R=\mathfrak{q}$. Also $\mathfrak{u}\cap R\subseteq\mathfrak{p}R_\mathfrak{p}\cap R=\mathfrak{p}$, we have $x\in\mathfrak{p}$ which is a contradiction. Then $R_\mathfrak{p}+R_\mathfrak{q}=\kappa$, and therefore $R_\mathfrak{p}/(R_\mathfrak{p}\cap R_\mathfrak{q})\cong \kappa/R_\mathfrak{q}\neq0$. If $af=0$, $a\neq0$, where $f\in\mathrm{Hom}_R(\kappa/R_\mathfrak{q},M)$, then $0=af(\kappa/R_\mathfrak{q})=f(a\kappa/R_\mathfrak{q})=f(\kappa/R_\mathfrak{q})$,
and so $f=0$. Consequently,
$(0)\in\mathrm{Ass}_R\mathrm{Hom}_R(R_\mathfrak{p}/(R_\mathfrak{p}\cap R_\mathfrak{q}),M)\subseteq\mathrm{Ass}_R \mathrm{Hom}_R(R_\mathfrak{p},M)$.
\end{proof}

We are now ready to prove our main result:
\vspace{2mm}

\hspace{-0.4cm}{\it{Proof of Theorem.}} It is enough to show that $\mathrm{Cosupp}_RM\subseteq\mathrm{cosupp}_RM$. Let $\mathfrak{p}\in\mathrm{Cosupp}_RM$. If $\mathfrak{p}=\mathfrak{m}$, then $\mathfrak{m}\in\mathrm{Supp}_RM=\mathrm{supp}_RM$ and hence $\mathfrak{m}\in\mathrm{cosupp}_RM$ by \cite[Theorem 4.13]{BIK2} and Remark \ref{lem:0.3}. Now assume that $\mathfrak{p}\neq\mathfrak{m}$.
It follows from Lemma \ref{lem:3.4} that $\mathfrak{p}R_\mathfrak{p}\in\mathrm{Ass}_{R_\mathfrak{p}}\mathrm{Hom}_R(R_\mathfrak{p},M)$. Hence \cite[Theorem 4.3]{Y} implies that $\mathrm{Hom}_{R_\mathfrak{p}}(R_\mathfrak{p}/\mathfrak{p}R_\mathfrak{p},\mathrm{Hom}_R(R_\mathfrak{p},M))\cong
\mathrm{Hom}_{R}(R_\mathfrak{p}/\mathfrak{p}R_\mathfrak{p},M)\neq0$, that is to say, $\mathfrak{p}\in\mathrm{cosupp}_RM$.
\hfill{$\square$}

\begin{rem}\label{lem:2.23}{\rm If $R$ is not local then the equality in Theorem does not hold  in general. In fact, let $R$ be a domain with $\mathrm{Max}R=\{\mathfrak{m},\mathfrak{n}\}$. Set
 $M=E(R/\mathfrak{m})$. Then $\mathrm{Cosupp}_RM=\mathrm{Spec}R$. Note that $\mathrm{supp}_RM=\mathrm{Supp}_RM=\{\mathfrak{m}\}$, it follows from \cite[Theorem 4.13]{BIK2} that $\mathfrak{n}\not\in\mathrm{cosupp}_RM$. Consequently, $\mathrm{cosupp}_RM\subsetneq\mathrm{Cosupp}_RM$.}
\end{rem}

\begin{cor}\label{lem:2.22}{\it{Let $\mathfrak{a}$ be a proper ideal of a local ring $(R,\mathfrak{m})$.
 If $M$ is an artinian $R$-module, then $\mathrm{Rad}(0:_R\mathrm{Hom}_R(R/\mathfrak{a},M))=\mathrm{Rad}(\mathfrak{a}+(0:_RM))$.}}
\end{cor}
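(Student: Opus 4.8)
Set $N=\mathrm{Hom}_R(R/\mathfrak{a},M)=(0:_M\mathfrak{a})$; as a submodule of the artinian module $M$ it is again artinian. Since $\mathrm{Rad}(I)=\mathrm{Rad}(J)$ precisely when $\mathrm{V}(I)=\mathrm{V}(J)$, the plan is to replace the claimed identity of ideals by the identity of closed sets $\mathrm{V}(0:_RN)=\mathrm{V}(\mathfrak{a}+(0:_RM))$. Applying the Theorem to the artinian modules $N$ and $M$, the left‑hand side is $\mathrm{Cosupp}_RN$ while the right‑hand side is $\mathrm{V}(\mathfrak{a})\cap\mathrm{V}(0:_RM)=\mathrm{V}(\mathfrak{a})\cap\mathrm{Cosupp}_RM$, so the corollary reduces to the equality $\mathrm{Cosupp}_RN=\mathrm{V}(\mathfrak{a})\cap\mathrm{Cosupp}_RM$. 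For the inclusion $\subseteq$ I would simply note that both $\mathfrak{a}$ and $(0:_RM)$ annihilate $N=(0:_M\mathfrak{a})$, so $\mathrm{V}(0:_RN)\subseteq\mathrm{V}(\mathfrak{a})\cap\mathrm{V}(0:_RM)$, and then rewrite the right‑hand factor using the Theorem.

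The substance is the reverse inclusion, so I would fix $\mathfrak{p}\in\mathrm{V}(\mathfrak{a})\cap\mathrm{Cosupp}_RM$ and argue in two cases. Suppose first $\mathfrak{p}\neq\mathfrak{m}$. Then Lemma~\ref{lem:3.4} shows $\mathfrak{p}\in\mathrm{Ass}_R\mathrm{Hom}_R(R_\mathfrak{p},M)$, so I may pick $z\in\mathrm{Hom}_R(R_\mathfrak{p},M)$ with $(0:_Rz)=\mathfrak{p}$. The key point is that $\mathfrak{a}\subseteq\mathfrak{p}=(0:_Rz)$ forces $\mathfrak{a}z=0$; since $\mathrm{Hom}_R(R_\mathfrak{p},N)=(0:_{\mathrm{Hom}_R(R_\mathfrak{p},M)}\mathfrak{a})$ by left exactness of $\mathrm{Hom}$, this means $z$ already lies in $\mathrm{Hom}_R(R_\mathfrak{p},N)$, whence $\mathfrak{p}\in\mathrm{Ass}_R\mathrm{Hom}_R(R_\mathfrak{p},N)$. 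Applying Lemma~\ref{lem:3.4'} to the artinian module $N$ then yields $\mathfrak{p}\in\mathrm{Cosupp}_RN$. Suppose instead $\mathfrak{p}=\mathfrak{m}$; I would dispose of the trivial case $M=0$ first. Otherwise the nonzero artinian module $M$ has a simple submodule, which over the local ring $R$ is a copy of $R/\mathfrak{m}$, so $(0:_M\mathfrak{a})\supseteq(0:_M\mathfrak{m})\neq0$, i.e.\ $N\neq0$; since $\mathfrak{m}$ contains the proper ideal $0:_RN$, this gives $\mathfrak{m}\in\mathrm{V}(0:_RN)=\mathrm{Cosupp}_RN$ at once. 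Combining the two cases yields $\supseteq$, and with $\subseteq$ the corollary follows.

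The only place where real work occurs is the non‑maximal case of $\supseteq$, where one must manufacture a witness for $\mathfrak{p}\in\mathrm{Cosupp}_RN$ out of a witness for $\mathfrak{p}\in\mathrm{Cosupp}_RM$. I expect this to go through smoothly, because the cyclic submodule of the co‑localization $\mathrm{Hom}_R(R_\mathfrak{p},M)$ realizing $\mathfrak{p}$ as an attached prime (supplied by Lemma~\ref{lem:3.4}) is automatically killed by $\mathfrak{a}$ once $\mathfrak{a}\subseteq\mathfrak{p}$, hence descends into the subobject $\mathrm{Hom}_R(R_\mathfrak{p},N)$, where Lemma~\ref{lem:3.4'} applies. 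Everything else is formal bookkeeping with the identity $\mathrm{Cosupp}=\mathrm{V}(\mathrm{Ann})$ from the Theorem and with $\mathrm{V}$ of sums of ideals.
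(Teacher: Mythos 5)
Your argument is correct, but it is not the proof the paper gives. The paper's proof is a one-line chain of equalities that passes through the \emph{small} cosupport: it uses the full Theorem ($\mathrm{Cosupp}=\mathrm{cosupp}=\mathrm{V}(\mathrm{Ann})$ for artinian modules) together with the formula $\mathrm{cosupp}_R\mathrm{Hom}_R(R/\mathfrak{a},M)=\mathrm{V}(\mathfrak{a})\cap\mathrm{cosupp}_RM$, i.e.\ the behaviour of $\mathrm{cosupp}$ under $\mathrm{Hom}_R(R/\mathfrak{a},-)$ in the style of Benson--Iyengar--Krause. You instead stay entirely with the big cosupport: writing $N=\mathrm{Hom}_R(R/\mathfrak{a},M)=(0:_M\mathfrak{a})$, you reduce via $\mathrm{Cosupp}=\mathrm{V}(\mathrm{Ann})$ to the equality $\mathrm{Cosupp}_RN=\mathrm{V}(\mathfrak{a})\cap\mathrm{Cosupp}_RM$, get the easy inclusion from $\mathfrak{a}+(0:_RM)\subseteq(0:_RN)$, and prove the substantial inclusion by hand: for non-maximal $\mathfrak{p}\supseteq\mathfrak{a}$ in $\mathrm{Cosupp}_RM$, Lemma~\ref{lem:3.4} produces $z\in\mathrm{Hom}_R(R_\mathfrak{p},M)$ with $(0:_Rz)=\mathfrak{p}$, the observation $\mathrm{Hom}_R(R_\mathfrak{p},N)=(0:_{\mathrm{Hom}_R(R_\mathfrak{p},M)}\mathfrak{a})$ shows $z$ lies in $\mathrm{Hom}_R(R_\mathfrak{p},N)$, and Lemma~\ref{lem:3.4'} applied to the artinian module $N$ gives $\mathfrak{p}\in\mathrm{Cosupp}_RN$; the maximal case is handled by the socle argument. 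All steps check out (including the identification of $\mathrm{Hom}_R(R_\mathfrak{p},N)$ inside $\mathrm{Hom}_R(R_\mathfrak{p},M)$ and the hypotheses of the two lemmas, since $\mathfrak{p}$ is non-maximal and $N$ is artinian). What your route buys is self-containedness: it uses only the $\mathrm{Cosupp}=\mathrm{V}(\mathrm{Ann})$ part of the Theorem plus the paper's own Lemmas~\ref{lem:3.4'} and \ref{lem:3.4}, avoiding the small-cosupport Hom formula that the paper invokes rather tersely; the price is a longer, case-split argument where the paper's proof is a formal five-line computation.
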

\begin{proof} Since $\mathrm{Hom}_R(R/\mathfrak{a},M)$ is artinian, it follows from Theorem that
\begin{center}$\begin{aligned}\mathrm{V}(\mathrm{Ann}_R\mathrm{Hom}_R(R/\mathfrak{a},M))&=\mathrm{Cosupp}_R\mathrm{Hom}_R(R/\mathfrak{a},M)\\
&=\mathrm{cosupp}_R\mathrm{Hom}_R(R/\mathfrak{a},M)\\
&=\mathrm{V}(\mathfrak{a})\cap\mathrm{cosupp}_RM\\
&=\mathrm{V}(\mathfrak{a})\cap\mathrm{V}(\mathrm{Ann}_RM)\\
&=\mathrm{V}(\mathfrak{a}+\mathrm{Ann}_RM).\end{aligned}$\end{center}
Therefore, we obtain the desired equality.
\end{proof}

\begin{cor}\label{lem:3.7}{\it{Let $N$ be an finitely generated $R$-module. One has
\begin{center}$\mathrm{Cosupp}_R\mathrm{Hom}_R(N,M)\subseteq\mathrm{Supp}_RN\cap\mathrm{Cosupp}_RM$,\end{center}
 equality hold if $M$ is artinian.}}
\end{cor}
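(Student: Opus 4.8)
The plan is to obtain the containment $\mathrm{Cosupp}_R\mathrm{Hom}_R(N,M)\subseteq\mathrm{Supp}_RN\cap\mathrm{Cosupp}_RM$ by combining two standard facts. First, for the inclusion into $\mathrm{Cosupp}_RM$, I would use the fact (the remark after \cite[Theorem 3.8]{Y}, together with \cite[Corollary 2.16]{Y}) that cosupport is stable under the formation of homomorphic images and that $\mathrm{Hom}_R(N,M)$ can be built from finitely many copies of $M$: a finite free presentation $R^a\to R^b\to N\to0$ dualizes into an exact sequence $0\to\mathrm{Hom}_R(N,M)\to M^b\to M^a$, so $\mathrm{Hom}_R(N,M)$ is a submodule of $M^b$; since cosupport does not grow under passing to submodules (as $\mathrm{Cosupp}$ is detected by nonvanishing of cocyclic quotients, and a cocyclic quotient of a submodule need not lift, one should instead argue via ${}^{\mathfrak p}(-)$ being exact on the relevant category, or invoke \cite[Theorem 2.10 and Corollary 2.16]{Y}), we get $\mathrm{Cosupp}_R\mathrm{Hom}_R(N,M)\subseteq\mathrm{Cosupp}_RM^b=\mathrm{Cosupp}_RM$. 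For the inclusion into $\mathrm{Supp}_RN$: if $\mathfrak p\notin\mathrm{Supp}_RN$ then $N_\mathfrak p=0$, and since $\mathrm{Hom}_R(N,M)_\mathfrak p$ and more relevantly $\mathrm{Hom}_R(R_\mathfrak p,\mathrm{Hom}_R(N,M))\cong\mathrm{Hom}_R(N,\mathrm{Hom}_R(R_\mathfrak p,M))\cong\mathrm{Hom}_{R_\mathfrak p}(N_\mathfrak p,\mathrm{Hom}_R(R_\mathfrak p,M))=0$, we get $\mathfrak p\notin\mathrm{Cos}_R\mathrm{Hom}_R(N,M)$, hence $\mathfrak p\notin\mathrm{Cosupp}_R\mathrm{Hom}_R(N,M)$.

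For the reverse inclusion when $M$ is artinian, the plan is to invoke the Theorem just proved. Since $\mathrm{Hom}_R(N,M)$ is artinian (being a submodule of the artinian module $M^b$), the Theorem gives $\mathrm{Cosupp}_R\mathrm{Hom}_R(N,M)=\mathrm{V}(\mathrm{Ann}_R\mathrm{Hom}_R(N,M))$, and likewise $\mathrm{Cosupp}_RM=\mathrm{V}(\mathrm{Ann}_RM)$. Thus it suffices to show $\mathrm{V}(\mathrm{Ann}_R N)\cap\mathrm{V}(\mathrm{Ann}_RM)\subseteq\mathrm{V}(\mathrm{Ann}_R\mathrm{Hom}_R(N,M))$, equivalently $\mathrm{Rad}(\mathrm{Ann}_R N+\mathrm{Ann}_RM)\subseteq\mathrm{Rad}(\mathrm{Ann}_R\mathrm{Hom}_R(N,M))$. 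Here I would pick a minimal generating set $x_1,\dots,x_b$ of $N$, realize $\mathrm{Hom}_R(N,M)$ as a submodule of $M^b$ via $f\mapsto(f(x_1),\dots,f(x_b))$, and observe that $\mathrm{Ann}_RM\subseteq\mathrm{Ann}_R\mathrm{Hom}_R(N,M)$ trivially, while for $a\in\mathrm{Ann}_RN$ one has $a\cdot f(x_i)=f(ax_i)=0$, so $\mathrm{Ann}_RN\subseteq\mathrm{Ann}_R\mathrm{Hom}_R(N,M)$ as well; hence $\mathrm{Ann}_RN+\mathrm{Ann}_RM\subseteq\mathrm{Ann}_R\mathrm{Hom}_R(N,M)$, which gives the containment of varieties and therefore the reverse inclusion.

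The main obstacle I anticipate is the first half of the argument — specifically, justifying $\mathrm{Cosupp}_R\mathrm{Hom}_R(N,M)\subseteq\mathrm{Cosupp}_RM$ cleanly. Naively, cosupport (in the sense of $\mathrm{Cosupp}$ via cocyclic quotients) is well-behaved under quotients but one must be careful with submodules; the safe route is to note $\mathrm{Cosupp}$ is additive on finite direct sums and then use that $\mathfrak p\in\mathrm{Cosupp}_R\mathrm{Hom}_R(N,M)$ iff ${}^{\mathfrak p}\mathrm{Hom}_R(N,M)\neq0$, compute ${}^{\mathfrak p}\mathrm{Hom}_R(N,M)$ via the Matlis-dual description and the presentation of $N$, and compare with ${}^{\mathfrak p}M$. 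Alternatively, once $M$ is assumed artinian one may simply run the variety computation of the second paragraph in both directions simultaneously and deduce the general inclusion for artinian $M$ directly, leaving only the $\mathrm{Supp}_RN$ factor to handle separately by the localization argument above; this sidesteps the subtlety entirely for the ``equality holds'' case, which is the part of the statement that matters most.
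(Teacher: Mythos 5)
There is a genuine gap in the part of your argument that matters most, the equality for artinian $M$. Your variety computation runs in the wrong direction: from $\mathrm{Ann}_RN+\mathrm{Ann}_RM\subseteq\mathrm{Ann}_R\mathrm{Hom}_R(N,M)$ one only gets $\mathrm{V}(\mathrm{Ann}_R\mathrm{Hom}_R(N,M))\subseteq\mathrm{V}(\mathrm{Ann}_RN+\mathrm{Ann}_RM)$, since a larger ideal has a smaller variety; your claimed equivalence of ``$\mathrm{V}(\mathrm{Ann}_RN)\cap\mathrm{V}(\mathrm{Ann}_RM)\subseteq\mathrm{V}(\mathrm{Ann}_R\mathrm{Hom}_R(N,M))$'' with ``$\mathrm{Rad}(\mathrm{Ann}_RN+\mathrm{Ann}_RM)\subseteq\mathrm{Rad}(\mathrm{Ann}_R\mathrm{Hom}_R(N,M))$'' is reversed ($\mathrm{V}(I)\subseteq\mathrm{V}(J)$ iff $J\subseteq\mathrm{Rad}(I)$). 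So what you actually prove is the forward containment $\mathrm{Cosupp}_R\mathrm{Hom}_R(N,M)\subseteq\mathrm{Supp}_RN\cap\mathrm{Cosupp}_RM$ a second time, while the missing inclusion $\mathrm{Supp}_RN\cap\mathrm{Cosupp}_RM\subseteq\mathrm{Cosupp}_R\mathrm{Hom}_R(N,M)$, equivalently $\mathrm{Ann}_R\mathrm{Hom}_R(N,M)\subseteq\mathrm{Rad}(\mathrm{Ann}_RN+\mathrm{Ann}_RM)$, is nowhere established; this is not a formal fact about annihilators (its special case $N=R/\mathfrak{a}$ is exactly Corollary \ref{lem:2.22}, which the paper deduces \emph{from} the present corollary). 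Your closing suggestion to ``run the variety computation in both directions'' founders on the same point. The paper obtains the reverse inclusion by a different mechanism: $\mathrm{Hom}_R(N,M)$ is artinian, so the Theorem gives $\mathrm{Cosupp}_R\mathrm{Hom}_R(N,M)=\mathrm{cosupp}_R\mathrm{Hom}_R(N,M)$, and the small cosupport satisfies $\mathrm{cosupp}_R\mathrm{Hom}_R(N,M)=\mathrm{supp}_RN\cap\mathrm{cosupp}_RM$, which equals $\mathrm{Supp}_RN\cap\mathrm{Cosupp}_RM$ by another application of the Theorem (together with $\mathrm{supp}_RN=\mathrm{Supp}_RN$ for finitely generated $N$). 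Any correct repair of your approach has to supply this ingredient or an equivalent one.

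A secondary gap occurs in your proof of the general containment (arbitrary $M$): for the $\mathrm{Supp}_RN$ factor you conclude from $\mathrm{Hom}_R(R_\mathfrak{p},\mathrm{Hom}_R(N,M))=0$ that $\mathfrak{p}\notin\mathrm{Cosupp}_R\mathrm{Hom}_R(N,M)$, but the known inclusion goes the other way, $\mathrm{Cos}_R\subseteq\mathrm{Cosupp}_R$ (\cite[Corollary 2.16]{Y}), and $\mathrm{Cosupp}$ can be strictly larger than $\mathrm{Cos}$ for non-artinian modules --- that discrepancy is the very reason Yassemi introduced $\mathrm{Cosupp}$. The fix is the device you already mention for the other factor: for finitely generated $N$ one has ${}^{\mathfrak{p}}\mathrm{Hom}_R(N,M)\cong\mathrm{Hom}_{R_\mathfrak{p}}(N_\mathfrak{p},{}^{\mathfrak{p}}M)$, which vanishes when $N_\mathfrak{p}=0$; alternatively one can simply quote \cite[Theorem 3.4]{Y}, which is all the paper does for this half. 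Your argument for the $\mathrm{Cosupp}_RM$ factor via the embedding $\mathrm{Hom}_R(N,M)\subseteq M^{b}$ and exactness of ${}^{\mathfrak{p}}(-)$ is fine once written out.
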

\begin{proof} The inclusion is by \cite[Theorem 3.4]{Y}. If $M$ is artinian, then $\mathrm{Hom}_R(N,M)$ is also artinian. Hence Theorem implies that  \begin{center}$\mathrm{Cosupp}_R\mathrm{Hom}_R(N,M)=\mathrm{cosupp}_R\mathrm{Hom}_R(N,M)=\mathrm{supp}_RN\cap\mathrm{cosupp}_RM$,\end{center}
so the proof are complete.
\end{proof}

\bigskip \centerline {\bf ACKNOWLEDGEMENTS} This research was partially supported by National Natural Science Foundation of China (11761060,11901463).

\bigskip


\begin{thebibliography}{99}
\bibitem{BIK}D.J. Benson, S.B. Iyengar and H. Krause, Local cohomology and support for triangulated categories,
\emph{Ann. Sci. $\acute{E}$cole Norm. Sup.} \textbf{41} (2008) 573--619.
\bibitem{BIK2}D.J. Benson, S.B. Iyengar and H. Krause, Colocalising subcategories and cosupport, \emph{J. Reine Angew. Math.} 673 (2012) 161--207.
\bibitem{BS}M.P. Brodmann, R.Y. Sharp, \emph{Local Cohomology: An Algebraic Introduction with Geometric Applications}, Cambridge University Press, 1998.
\bibitem{CNh}N.T. Cuong, L.T. Nhan, On representable linearly compact modules, \emph{Proc. Amer.
Math. Soc.} \textbf{130} (2002) 1927--1936.
\bibitem{SL} S. Lefschetz, \emph{Algebraic Topology}, vol. 27, Colloq. Lect. Amer. Soc. 1942.
\bibitem{HL}H. Leptin, Linear kompakte Moduln und Ringe -I, \emph{Math. Z.} \textbf{62} (1955) 241--267.
\bibitem{Ma}I.G. Macdonald, Duality over complete local rings, \emph{Topology} \textbf{1} (1962) 213--235.
 \bibitem{MS}L. Melkersson and P. Schenzel, The co-localization of an Artinian module,
\emph{Proc. Edinburgh Math. Soc.} 38 (1995) 121--131.
\bibitem{Y}S. Yassemi, Coassociated primes, \emph{Comm. Algebra} \textbf{23} (1995) 1473--1498.
 \bibitem{DZ}D. Zelinsky, Linearly compact modules and rings, \emph{Amer. J. Math.} \textbf{75} (1953) 79--90.
\bibitem{Z}H. Z$\mathrm{\ddot{o}}$schinger, Linear-Kompakte Moduln $\mathrm{\ddot{u}}$ber noetherschen Ringen, \emph{Arch. Math.}
(Basel) \textbf{41} (1983) 121--130.
\end{thebibliography}
\end{document}